\newtheoremstyle{mystyle}%
{\topsep}{\topsep}
{\itshape}{}
{\bfseries}{.}
{0.5em}
{\thmname{\@ifempty{#3}{#1}\@ifnotempty{#3}{#3}}}
\theoremstyle{mystyle}
\date{}
\title{Pair Correlation of the Fractional Parts of $\alpha n^{\theta}$}
 \author{
{\sc Christopher Lutsko,
Athanasios Sourmelidis,
and Niclas Technau
}
}
\begin{document}

  \maketitle
  \begin{abstract}
   \noindent
   
   Fix $\alpha,\theta >0$, and consider the sequence $(\alpha n^{\theta} \Mod 1)_{n\ge 1}$. Since the seminal work of Rudnick--Sarnak (1998), and due to the Berry--Tabor conjecture in quantum chaos, the fine-scale  properties of these dilated mononomial sequences have been intensively studied. In this paper we show that for $\theta <14/41$, and $\alpha>0$, the pair correlation function is Poissonian. While (for a given $\theta \neq 1$) this strong pseudo-randomness property has been proven for almost all values of $\alpha$, there are next-to-no instances where this has been proven for explicit $\alpha$. Our result holds for all $\alpha>0$ and relies solely on classical Fourier analytic techniques. This addresses (in the sharpest possible way) a problem posed by Aistleitner--El-Baz--Munsch (2021).

  \let\thefootnote\relax\footnotetext{{\sc MSC2020:} 11K06, 11L07}
  \let\thefootnote\relax\footnotetext{{\sc Key words and phrases:}
 Local Statistics; Sequences Modulo $1$; Exponential Sums; van der Corput's Method.} 
  \end{abstract}

  \setlength{\abovedisplayskip}{1mm}

  \section{Introduction}

Let  $x=(x_n)_{n \ge 1} $
be a sequence on the unit interval $[0,1)$.
The \emph{pair correlation function} of $x$
measures the correlation between points in the initial segment 
$\{x_n:n \leq N \}$ on the scale of the mean spacing, $1/N$,
and is defined by
\begin{align}
R(x,N,f) := \frac{1}{N} \sum_{i\neq j \leq N} \sum_{k\in \Z} f(N(x_i-x_j+k)),
\end{align}
where $f \in \cC_c^\infty(\R)$ is a compactly supported, $C^\infty$-function.
The sequence $x$ is said to have \emph{Poissonian pair correlation} 
if the pair correlation function converges 
to the integral of $f$ (over $\mathbb{R}$)
as $N \to \infty$, just as one would expect for 
uniformly distributed and independent random variables. 
That is, the sequence $x$ has Poissonian pair correlation if for all $f \in \cC_c^\infty(\R)$
\begin{align}
 \lim_{N\rightarrow \infty} 
R(x,N,f) = \int_{\R}f(t) ~\mathrm{d}t.
\end{align}
The notion of Poissonian pair correlation defines a strong measure of pseudo-randomness 
and is a basic concept in quantum chaos.
Unsurprisingly, various efforts have been made
\cite{RudnickSarnak1998,BocaZaharescu2000, RudnickZaharescu2002,MarklofStrom2003, Heath-Brown2010,
  AistleitnerLarcherLewko2017}
to study the pair correlation function of monomial sequences
\begin{align}\label{def: monomial}
 (\alpha n^{\theta} \Mod 1)_{n\geq 1},
\end{align}
where $\theta>0$ and $ \alpha>0$. However, little progress has been made to verify that the pair correlation of such monomial sequences
is Poissonian (under explicit conditions on $\alpha, \theta$). 
We present the state of the art for \eqref{def: monomial} 
in Section \ref{ss: history}. In this paper we prove the first general and explicit result showing that such monomial sequences exhibit Poissonian pair correlation. Namely,

\begin{theorem}\label{thm: main theorem}
If $\theta\in (0, 14/41)$ and $\alpha>0$, then \eqref{def: monomial} has Poissonian pair correlation.
\end{theorem}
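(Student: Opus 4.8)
The plan is the classical route: a Poisson-summation reduction to a second moment of exponential sums, followed by van der Corput's method to bound the surviving multiple sum.

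Write $e(t):=e^{2\pi i t}$ and $S_N(m):=\sum_{n\le N}e(m\alpha n^{\theta})$. Applying Poisson summation to the inner sum over $k$ in $R(x,N,f)$ gives, for $N$ large,
\[
R(x,N,f)=\frac1{N^{2}}\sum_{m\in\Z}\widehat f\!\left(\frac mN\right)\bigl(|S_N(m)|^{2}-N\bigr).
\]
The $m=0$ term equals $\widehat f(0)(1-1/N)=\int_{\R}f+o(1)$, and since $\widehat f$ is Schwartz the terms with $|m|>N^{1+\varepsilon}$ contribute $o(1)$; so Theorem \ref{thm: main theorem} follows once we show
\[
\sum_{1\le|m|\le N^{1+\varepsilon}}\widehat f\!\left(\frac mN\right)\bigl(|S_N(m)|^{2}-N\bigr)=o(N^{2}).
\]
Expanding $|S_N(m)|^{2}$ and removing the diagonal, $|S_N(m)|^{2}-N=\sum_{h\ne0}\sum_{n}e\bigl(m\alpha\psi_h(n)\bigr)$ with $\psi_h(x):=(x+h)^{\theta}-x^{\theta}$ and $n$ running over $1\le n\le N$, $1\le n+h\le N$, so it suffices to prove
\[
\sum_{1\le|m|\le N^{1+\varepsilon}}\widehat f\!\left(\frac mN\right)\sum_{0<|h|<N}\sum_{n}e\bigl(m\alpha\psi_h(n)\bigr)=o(N^{2}).
\]
Unlike for the pair count itself there is no residual main term to isolate here — the Poissonian value $\int_{\R}f$ has already been accounted for by the $m=0$ term.

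For fixed $m,h$ the phase $m\alpha\psi_h$ is smooth with derivatives of predictable size ($\psi_h^{(r)}(x)\asymp_{\theta} h\,x^{\theta-1-r}$ for $h\le x$, while for $h\asymp N$ the $n$-sum is short), so van der Corput's method applies. The plan is to run the $B$-process (Poisson summation together with the stationary-phase expansion of the resulting exponential integrals) on the $n$-sum: this converts $\sum_{n}e(m\alpha\psi_h(n))$ into a much shorter sum over a dual variable $\ell$ of length $\asymp|m|\alpha$, with amplitude $\asymp(|m|\alpha h)^{1/(2(2-\theta))}|\ell|^{-(3-\theta)/(2(2-\theta))}$ and phase of the monomial shape $c_{\theta}(|m|\alpha h)^{1/(2-\theta)}|\ell|^{(1-\theta)/(2-\theta)}$ (in the principal regime where $h$ is small), up to lower-order and error terms; one has to handle separately the cases where the $B$-process produces no stationary point (where the relevant quantity is the small exponential integral $\int e(m\alpha\psi_h(x))\,\mathrm dx$), where $h$ is of order $N$, and where the stationary point lies near the ends of the $n$-range. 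It then remains to estimate the resulting sum over $m,h,\ell$: any genuinely non-oscillatory contribution is of lower order, while for the oscillatory part one applies van der Corput once more — now to essentially monomial phases, via the derivative tests (Kuzmin--Landau and higher) and a suitable exponent pair — and sums over the full ranges of $m$, $h$ and $\ell$.

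The crux is this last step: propagating the amplitudes, phase sizes and summation lengths through the (iterated) van der Corput process, keeping control of all the error terms and of the several parameter regimes, and then choosing the exponent pair so that the accumulated error genuinely beats $N^{2}$. The constraint $\theta<14/41$ is exactly what the optimization of this process requires (and is presumably sharp for it); the weaker but cleaner bound $\theta\le1/3$ quoted in the abstract corresponds to a less delicate choice at this point.
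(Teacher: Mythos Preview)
Your Poisson-summation reduction is correct and matches the paper's, and your observation that subtracting $N$ (the $h=0$ diagonal) already accounts for the main term is fine. The divergence is in what comes next.

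The paper does \emph{not} Weyl-difference via $h$ and then $B$-process the non-monomial phase $\psi_h(n)=(n+h)^\theta-n^\theta$. Instead it applies the $B$-process to each factor $\sum_{y}e(\alpha k y^{\theta})$ separately; because the phase is a pure monomial, the output is again monomial, namely a sum over a dual variable $r$ with phase $\beta k^{\Theta}r^{1-\Theta}$ (here $\Theta=1/(1-\theta)$). After multiplying the two factors one has a sum over $(r_1,r_2,k)$ with phase $\gamma(\mathbf r)k^{\Theta}$, and the decisive move is to \emph{swap} the $(r_1,r_2)$ and $k$ summations and run van der Corput on the long, clean monomial sum $\sum_{k\le N^{1+\varepsilon}}e(\gamma k^{\Theta})$: a second $B$-process in $k$ followed by the $A$-process with $l=3$ yields the bound $S\ll e^{8u/15}N_{q_2}^{11\theta/30}$, and it is exactly this numerology that produces $\theta<14/41$. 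Your proposal never isolates a single long variable with a monomial phase; after $B$-processing $\psi_h$ you face a triple sum over $(m,h,\ell)$ with phase proportional to $(mh)^{1/(2-\theta)}\ell^{(1-\theta)/(2-\theta)}$, and you do not say in which variable the second van der Corput step is taken or carry out any exponent-pair calculation. The assertion that ``$\theta<14/41$ is exactly what the optimization of this process requires'' is therefore unsupported.

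There is also a concrete obstruction in your route. On a dyadic range $n\asymp N$ with $h\ll N$ the second derivative of $m\alpha\psi_h$ is $\asymp mhN^{\theta-3}$, so the $B$-process error is $\asymp (mh)^{-1/2}N^{(3-\theta)/2}$; summed over $m\asymp M\le N^{1+\varepsilon}$ and $h\asymp H\le N$ this contributes $\asymp (MH)^{1/2}N^{(3-\theta)/2}$, which for $MH$ near $N^{2}$ is of size $N^{(5-\theta)/2}\gg N^{2}$ when $\theta<1$. The paper avoids this because its first $B$-process is on the undifferenced monomial, whose error $k^{-1/2}N_q^{1-\theta/2}$ is harmless once paired with a nontrivial bound on the main term. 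Your sketch does not explain how this error is absorbed, and without the swap-to-$k$ idea it is not clear that it can be.
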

\begin{remark} 
    \begin{enumerate}
        \item Our method applies 
        to higher level correlations, 
        although this generalisation is 
        not straightforward as it requires a genuinely
         multidimensional approach
        (see \cite{LutskoTechnau2021},
        and also \cite{LutskoTechnau2022}). 
        Moreover, the only arithmetic input 
        of our method are exponential sum bounds.
        Thus, with some modification, 
        the method can be extended 
        to more general sequences satisfying 
        certain growth conditions. 
        \item The method of proof allows one to show that the pair correlation function converges to $\int_{\mathbb{R}} f(x) ~\mathrm{d}x$ with a polynomially decaying error in $N$ which is uniform for all $\alpha$ in a fixed compact interval.
        \item When $\theta =1/3$ and $\alpha^3 \in \Q$, then the triple correlation is not Poissonian (because of the cubes $n^3$). Thus, Theorem \ref{thm: main theorem} gives an example of a sequence whose pair correlation is Poissonian, but whose triple correlation is not.
    \end{enumerate}
\end{remark}

\noindent\textbf{Organization of the Paper:} Subsection \ref{ss: history} 
presents a brief history of these monomial sequences.
Subsection \ref{ss:Ideas} sketches the proof of Theorem \ref{thm: main theorem},
and Subsection \ref{ss:Heuristic} provides a heuristic argument which
indicates the limitations of our method. 
In Section \ref{s:Pair Correlation} we collect lemmata,
reducing matters to bounding certain exponential sums. 
Finally, in Section \ref{s:Proof of the Main Theorem} we prove Theorem \ref{thm: main theorem}.

\subsection{Background}\label{ss: history}

The study of monomial sequences dates back to Weyl \cite{Weyl1916} 
who used them in his study of uniform distribution (see \cite{KupiersNiederreiter1974} 
or \cite{DrmotaTichy2006}). 
More recently, there has been renewed interest in these sequences. 
In part, this is due to the well-known Berry--Tabor conjecture \cite{BerryTabor1977} 
which hypothesizes a link between the pseudo-randomness properties 
of energy levels, and dynamics of quantum systems. For more details,
see either of the following review papers \cite{Marklof2000, Rudnick2008}.

The holy grail of this field is to find circumstances 
for which a sequence has Poissonian gap statistics. 
That is, consider the distribution of gaps between neighboring first $N$ elements 
of the sequence -- scaled to have average $1$ -- then
we say the sequence exhibits Poissonian gap statistics
if this (finite) distribution converges to the exponential distribution,
as one would expect for independent random variables. 
While the aforementioned behavior is conjectured in many instances, 
it is truly challenging to prove. 
Thus, mathematicians have turned to weaker measures of pseudo-randomness. 
In particular, there has been a lot of recent work on the pair correlation. 
Indeed, if one could show that the $m$-level correlation converges 
to the expected value for independent random variables (for every $m \ge 2$) then, 
by the method of moments, one can infer that the sequence has Poissonian gap statistics.

If we consider the random variable counting the number of sequence elements in a randomly shifted set of size comparable to $1/N$, then the $m$-level correlations arise from the moments of this variable. Thus, the $m$-level correlations are natural measures
of pseudo-randomness in their own right. We refer to \cite{Marklof2007} for further discussion.

\vspace{-2mm}
\subsubsection{Pair correlation of deterministic sequences}\label{subsubsection: deterministic sequences}

The few deterministic sequences whose pair correlation functions
are known to be Poissonian either require the presence of particularly strong arithmetic structure,
or tools from homogeneous dynamics to apply. 
An example of the former is the work of Kurlberg and Rudnick \cite{RudnickKurlberg1999} 
on the (appropriately normalised) spacing of the quadratic residues of a highly composite 
modulus. 
In fact, they show that the gap statistics are Poissonian. 
However this setting requires the use of arithmetic tools which cannot be relied on in our situation.

On the homogeneous dynamics side, Elkies and McMullen \cite{ElkiesMcMullen2004}
established a remarkable link between 
\eqref{def: monomial}, for $(\theta,\alpha) = (1/2,1)$,
and flows on the modular surface $\SL_2(\R)/\SL_2(\Z)$.
They used this connection, and tools from homogeneous dynamics,
to establish that the corresponding gap distribution is 
\emph{not} Poissonian. 
Surprisingly, El-Baz, Marklof, and Vinogradov \cite{El-BazMarklofVinogradov2015a} 
then exploited said relationship further to show that, if one removed the squares, the pair correlation \emph{is Poissonian}. 
However, the connection to homogeneous dynamics requires a particular scaling property which only holds when $\theta=1/2$ 
and $\alpha^2 \in \Q$.
Indeed for $\alpha^2 \not\in \Q$ it is conjectured \cite{ElkiesMcMullen2004} 
the gap statistics are Poissonian.

For the sequence $(\alpha n \mod 1)_{n \ge 1}$, the three gap theorem 
(also known as the Steinhaus conjecture) states that the size of the 
gaps between neighboring points, at any time $N$, form a set of cardinality at most $3$. 
Hence, the local statistics are certainly not Poissonian. 
For background see \cite{MarklofStrom2017, MukherjeeKarner1998}.

\vspace{-4mm}

\vspace{2mm}

\subsubsection{Metric Poisson Pair Correlation}\label{subsubsection: metric theory}

Generally speaking, it is believed that, given a $\theta>0$, the pseudo-random properties 
of \eqref{def: monomial} are determined by the Diophantine properties of $\alpha$ 
(e.g see \cite[Remark 1.2]{RudnickSarnak1998}). However, 
in the absence of methods to prove Poissonian pair correlation for explicit values of 
$\alpha$, Rudnick and Sarnak \cite{RudnickSarnak1998} introduced the concept of
\emph{metric Poisson pair correlation}. Namely a general sequence $(x_n)_{n\geq 1}$ 
has metric Poisson pair correlation, if the dilated sequence 
$(\alpha x_n \Mod 1)_{n\geq 1}$ 
has Poissonian pair correlation for all $\alpha>0$ outside of a (Lebesgue) null set.  

For $\theta \in \N_{>1}$, Rudnick and Sarnak \cite{RudnickSarnak1998} 
proved the metric Poissonian pair correlation of \eqref{def: monomial} in the late 90s.
The case of non-integer $\theta >1$ was only recently settled 
by Aistleitner, El-Baz, and Munsch \cite{AistleitnerEl-BazMunsch2021}.
The regime $0<\theta <1$ was addressed 
by Rudnick and the second named author
\cite{RudnickTechnau2021}.

Special attention has been given to the quadratic case, $\theta = 2 $, due to 
its connection with quantum chaos and the boxed harmonic oscillator. 
Here, Heath-Brown \cite{Heath-Brown2010} 
gave an algorithmic construction of a dense set of $\alpha$
for which the pair correlation is Poissonian.
Moreover, there have been some results for longer-range correlations \cite{TechnauWalker2020, Lutsko2020}, convergence along sparse subsequences \cite{RudnickSarnakZaharescu2001,FassinaKimZaharescu2021}, and minimal gaps \cite{Zaharescu1995, Regavim2021, Rudnick2018}. However, finding explicit $\alpha$ for which the pair correlation is Poissonian remains out of reach.

Finally, it is worth noting that the metric Poisson pair correlation theory 
has been generalized beyond monomial sequences and exploits some deep connections 
to additive combinatorics \cite{AistleitnerLarcherLewko2017, BloomWalker2020}. 
However, this connection is beyond the scope of this paper.

\vspace{2mm}

    \vspace{2mm}
    \textbf{Notation:}
     Throughout, we use the usual Bachmann--Landau notation: for functions $f,g:X \rightarrow \mathbb{R}$, defined on some set $X$, we write $f \ll g$ (or $f=O(g)$) to denote that there exists a constant $C>0$ such that $\vert f(x)\vert \leq C \vert g(x) \vert$ for all $x\in X$.  
     Moreover let $f\asymp g$ denote $f \ll g$ and $g \ll f$. 
     Furthermore, let $f = o(g)$ denote that $\frac{f(x)}{g(x)} \to 0$. 
     
    Throughout we denote $e(x) = e^{2\pi i x}$ and $\wh{f}$ is the Fourier transform 
    (on $\R$) of $f$. 
    All of the sums which appear range over integers, in the indicated interval. 
    As $\alpha, \varepsilon, \theta, $ and $f$ are considered fixed, 
    we suppress any dependence in the implied constants. Moreover, for ease of notation, $\varepsilon>0$ may vary from line to line by a bounded constant.
    Further, we will frequently encounter the exponent 
    $$
    \Theta := \frac{1}{1-\theta}.
    $$

\subsection{Idea of the Proof}\label{ss:Ideas}
 The proof relies on a well-known Fourier decomposition. 
 First, we include the diagonal term in the pair correlation function, 
 to simplify technicalities. Thus, define
\begin{align}
  \wt{R}(N,f) : = \frac{1}{N} \sum_{\vect{y} \in [1,N]^2} \sum_{k \in \Z} f(N(\alpha y_1^{\theta}- \alpha y_2^{\theta} +k)).
\end{align}
Note that Theorem \ref{thm: main theorem} is equivalent to showing that (as $N \to \infty$)
\begin{align}
  \wt{R}(N,f) = \int_{\R}f(x) \, \mathrm{d}x + f(0) + o(1).
\end{align}
By the Poisson summation formula,
\begin{align}
  \wt{R}(N,f) = \wh{f}(0) +   \frac{1}{N^2} 
  \sum_{ \abs{k} \in [1, N^{1+\varepsilon}]} 
  \wh{f} \Big(\frac{k}{N}\Big) \abs{\sum_{y \in [1, N]} e(\alpha k y^{\theta})}^2   +  o(1)
\end{align}
for $\varepsilon >0$ where the $o(1)$-error comes from the fast decay of $\widehat{f}$. 
Note that $f$ can be decomposed 
into a sum of an even and an odd function. Further,
the Fourier coefficients of the odd part cancel out,
and the Fourier coefficients of the even part are even functions themselves. Thus, without loss of generality we may assume $f$ is even. Hence, it suffices to show that
\begin{align}\label{E def}
  \cE(N) := \frac{2}{N^2} \sum_{ k \in [1, N^{1+\varepsilon}]} 
  \wh{f} \Big(\frac{k}{N}\Big) \abs{ \sum_{y \in [1, N]} 
  e(\alpha k y^{\theta})}^2 = f(0)  + o(1).
 \end{align}

To achieve the desired bound requires a detailed analysis of the exponential sums in \eqref{E def}. 
We argue in, roughly, two steps:
first we decompose the innermost summation, 
and apply van der Corput's B-process to obtain a saving in the $y$-summation.
Second, we expand the square
and use some analytic tricks to reduce the estimates to exponential sums over $k$.
Now to obtain a saving in the $k$-summation,
we again use the B-process coupled with other estimates (such as Weyl differencing).

\subsection{Heuristic}
\label{ss:Heuristic}

After applying the B-process, interchanging the order of summation, 
extracting the main terms and dealing with the error terms, our task is
the following. We need to show that 
\begin{align*}
    \Err := \frac{2}{N^2} 
    \sum_{\substack{r_1,r_2 > cN^{\theta}\\r_1\neq r_2}}^N 
    \frac{1}{(r_1r_2)^{\frac{\Theta+1}{2}}}   \sum_{k \in [1,N^{1+\varepsilon}]} 
    \wh{f}\Big(\frac{k}{N}\Big) k^{\Theta} e(\gamma(\vect{r}) k^\Theta )
\end{align*}
is $o(1)$, as $N \to \infty$, where $\gamma(\vect{r}) = \beta(r_2^{1-\Theta}-r_1^{1-\Theta})$, 
and $\beta$ and $c$ depend only on $\theta$ and $\alpha$. 
Now we apply partial summation to reduce matters to estimating
\begin{align*}
    \abs{ \sum_{k \in [1, N^{1+\varepsilon}]} e(\gamma(\vect{r})k^\Theta)}.
\end{align*}
If we had square root cancellation for this sum
-- uniformly in $\gamma(\mathbf{r})$ --
then our method yields $\Err \approx N^{\theta-1/2+\varepsilon}$. 
In other words, even with optimal bounds, 
we cannot hope to go past the barrier $\theta = 1/2-\varepsilon$. 
To move past this barrier, our analytic method 
would require taking advantage of the cancellation 
between exponential sums for different values of $\vect{r}$.
This seems to be well beyond current technology.

Interestingly, if we consider instead the triple correlation function, 
the natural barrier to our methods turns out to be $\theta < 1/3$. 
In fact as we consider higher and higher correlations, that barrier goes to $0$.

\subsection{Preliminaries}
\label{ss:The A and B Processes}

The following two results are fundamental in the modern study of exponential sums. 
First, we recall an application of Weyl's differencing method 
(called the A-Process, see \cite[Theorem 2.9]{GrahamKolesnik1991}):

\begin{theorem}[$A$-Process]
\label{thm:A-Process}   
    Let $l\ge 0$ be an integer and let $M >2$. Suppose $\phi:[a,b)\to \R$ has $l+2$ continuous derivatives on $[a,b) \subseteq [M,CM)$, where $C>1$ is some fixed constant, and assume there exists a constant $F>0$ such that
    \begin{align}\label{growth in A-process}
        \phi^{(r)}(x) \asymp F M^{-r}
    \end{align}
    for $r = 1, \dots, l+2$. Then 
    \begin{align}\label{A-process conc}
        \sum_{x\in [a,b)} e(\phi(x)) \ll F^{1/(4L-2)} M^{1-(l+2)/(4L-2)} + F^{-1}M,
    \end{align}
    where $L := 2^l$. The implicit constant in \eqref{A-process conc} depends on the choice of $l$ and the implicit constant(s) in \eqref{growth in A-process}.
\end{theorem}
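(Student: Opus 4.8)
The plan is to prove Theorem~\ref{thm:A-Process} by the classical van der Corput induction, the hypothesis \eqref{growth in A-process} being exactly the input for the $(l+2)$-nd derivative test. We induct on $l$, each step removing one Weyl differencing, with van der Corput's second-derivative estimate as the base case. (Equivalently, one may perform all $l$ differencings at once, land on an $l$-fold difference of $\phi$ whose second derivative still has a controlled size, apply the second-derivative inequality, and back-substitute.)

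For the base case $l=0$, \eqref{growth in A-process} gives $\phi'\asymp FM^{-1}$ and $\phi''\asymp FM^{-2}$, so $\phi'$ is monotone and varies by $\asymp FM^{-1}$ over $[a,b)$. Split $[a,b)$ into $O(FM^{-1}+1)$ subintervals on each of which $\phi'$ traverses a set of length $\le 1$; applying on each the Kuzmin--Landau inequality away from the nearest integer and the trivial bound on a short neighbourhood of it (with the cut-off optimised) gives $\sum_{x\in[a,b)}e(\phi(x))\ll F^{1/2}+F^{-1}M$, which is \eqref{A-process conc} with $L=1$. It is essential here to use the first-derivative bound, not merely $\phi''\asymp FM^{-2}$.

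For the inductive step, assume the statement with $l-1$ in place of $l$. Given $\phi$ with $l+2$ controlled derivatives and an integer $1\le H\le b-a$, Weyl differencing (Cauchy--Schwarz) gives
\[
  \Bigl\lvert \sum_{x\in[a,b)}e(\phi(x)) \Bigr\rvert^{2} \ll \frac{M^{2}}{H} + \frac{M}{H}\sum_{1\le h<H}\Bigl\lvert \sum_{x\in I_h}e\bigl(\phi(x+h)-\phi(x)\bigr)\Bigr\rvert ,
\]
with each $I_h\subseteq[a,b)$ an interval. Because the $\phi^{(r)}$ do not change sign, $\phi_h(x):=\phi(x+h)-\phi(x)$ satisfies $\phi_h^{(r)}(x)=\int_x^{x+h}\phi^{(r+1)}\asymp (hF/M)\,M^{-r}$ for $r=1,\dots,l+1$, so $\phi_h$ meets the hypotheses with $l-1$ and $F_h:=hF/M$ replacing $l$ and $F$, uniformly in $h$. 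Applying the inductive hypothesis to each inner sum, summing over $h<H$, taking square roots, and choosing $H$ optimally turns the level-$(l-1)$ exponent $2^{l+1}-2$ into $2^{l+2}-2=4L-2$ and yields \eqref{A-process conc}.

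The main obstacle is the bookkeeping, not any individual estimate. One has to (i) track the effective size $F_h$ through the $l$ levels and check it stays where the previous case applies, which restricts the admissible $H$ and, for extreme $F$, forces a fall-back to the trivial bound $\ll M$ or to the base case; (ii) pick $H$ so that the diagonal term $M^{2}/H$ balances $\tfrac{M}{H}\sum_{h}(\cdots)$ and the doubling $L=2^{l}$ shows up in exactly the right exponent; and (iii) carry the secondary term $F^{-1}M$ and the $h$-dependent intervals $I_h$ through the recursion while keeping all implicit constants uniform --- which is what the application to \eqref{E def} requires. Each is routine alone; their combination is what makes the argument technical.
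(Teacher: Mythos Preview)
The paper does not give its own proof of this theorem: it is stated in the preliminaries with a direct citation to Graham--Kolesnik, \emph{Van der Corput's Method of Exponential Sums}, Theorem~2.9, and no argument is supplied. Your sketch is correct and is precisely the classical van der Corput induction that Graham--Kolesnik present: the base case $l=0$ is the combined first/second-derivative estimate (Kuzmin--Landau together with the second-derivative test, which is why the secondary term is $F^{-1}M$ rather than $F^{-1/2}M$), and the inductive step is a single Weyl differencing with the mean-value observation $\phi_h^{(r)}\asymp (hF/M)M^{-r}$, followed by optimisation in $H$ that turns the exponent $2L-2$ into $4L-2$. So there is nothing to compare against in the paper itself; what you wrote is the standard proof and matches the cited reference.
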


Further, we will use van der Corput's B-process, which follows from Poisson summation and a stationary phase argument (see {\cite[Theorem 8.16]{IwaniecKowalski2004}}):

\begin{theorem}[$B$-Process]
\label{thm:B-Process}Let $\phi:[A,B)\rightarrow\mathbb{R}$
be a $C^{4}$-function so that there are $\Lambda>0$ and $\eta\geq1$ with
\begin{equation}
\Lambda\leq\phi^{(2)}(x)<\eta\Lambda,
\qquad \big|\phi^{(3)}(x)\big|<\frac{\eta\Lambda}{B-A},
\qquad \big|\phi^{(4)}(x)\big|<
\frac{\eta\Lambda}{\left(B-A\right)^{2}}\label{eq: growth conditions in B Process}
\end{equation}
for all $x\in[A,B)$. Let $a=\phi'(A)$, and $b=\phi'(B)$. Then
\[
\sum_{n\in [A,B)}e(\phi(n))=e(1/8)\sum_{m\in[a,b)}\frac{e(\phi(x_{m})-mx_{m})}{\sqrt{\phi^{(2)}(x_{m})}}+\omega_{\phi}(A,B)
\]
where $x_{m}$ denotes the unique solution to $\phi'(x)=m$. Furthermore,
\begin{equation}
\omega_{\phi}(A,B)\ll\Lambda^{-\frac{1}{2}}+\eta^{2}\log(b-a+1),\label{eq: error bound in B Process}
\end{equation}
where the implied constant is absolute.
\end{theorem}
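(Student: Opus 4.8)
The statement is van der Corput's classical $B$-process; the plan is to follow the textbook route, namely to pass to the dual side by Poisson summation and then evaluate the resulting integrals by the method of stationary phase. Since $\phi^{(2)}\geq\Lambda>0$, the derivative $\phi'$ is strictly increasing, hence a bijection of $[A,B)$ onto $[a,b)$, so for each integer $m\in[a,b)$ there is exactly one $x_m\in[A,B)$ with $\phi'(x_m)=m$; these stationary points will produce the main term. Up to the harmless normalisation of replacing $\phi(x)$ by $\phi(x)-\lfloor a\rfloor x$ — which leaves $\sum_n e(\phi(n))$ and the derivatives $\phi^{(2)},\phi^{(3)},\phi^{(4)}$ unchanged while shifting $[a,b)$ into $[0,b-a+1)$ — one may assume $b-a$ genuinely measures the number of relevant frequencies.

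First I would set up the Fourier side: writing $g(x)=e(\phi(x))$ and applying a truncated Poisson summation formula (equivalently, Poisson summation against $g$ times smooth minorants and majorants of $\mathbbm{1}_{[A,B)}$, estimating the discrepancy), one obtains
\begin{align*}
  \sum_{A\leq n<B}e(\phi(n))=\sum_{|m|\leq H}I_m+(\text{truncation error}),\qquad I_m:=\int_A^B e\big(\phi(x)-mx\big)\,\mathrm{d}x,
\end{align*}
for a cutoff $H\asymp b-a$, the truncation error being controlled by two integrations by parts against $\phi^{(2)}\geq\Lambda$ and of admissible size. Next I would estimate $I_m$ in two regimes. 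For $m\notin[a,b)$ the phase $\phi'(x)-m$ has no zero on $[A,B)$ and is monotone, so van der Corput's first-derivative test gives $|I_m|\ll\mathrm{dist}(m,[a,b))^{-1}$; summing over $|m|\leq H$ with $m\notin[a,b)$ contributes $\ll\log(b-a+1)$. For $m\in[a,b)$ the phase has the unique non-degenerate stationary point $x_m$ with $\phi^{(2)}(x_m)\in[\Lambda,\eta\Lambda)$, and the quantitative stationary phase lemma yields
\begin{align*}
  I_m=\frac{e\big(\phi(x_m)-mx_m+1/8\big)}{\sqrt{\phi^{(2)}(x_m)}}+E_m,
\end{align*}
where $E_m$ is bounded in terms of $\phi^{(2)},\phi^{(3)},\phi^{(4)}$ and $B-A$; the hypotheses \eqref{eq: growth conditions in B Process} are precisely the normalisation under which $\sum_{m\in[a,b)}|E_m|\ll\Lambda^{-1/2}+\eta^2\log(b-a+1)$.

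Finally I would assemble the pieces: the main terms $e(1/8)\sum_{m\in[a,b)}e(\phi(x_m)-mx_m)/\sqrt{\phi^{(2)}(x_m)}$ reproduce the asserted formula, and the three sources of error — the Poisson truncation error, the non-stationary tail $\sum_{m\notin[a,b)}|I_m|$, and the stationary-phase errors $\sum_{m\in[a,b)}|E_m|$ — are each $\ll\Lambda^{-1/2}+\eta^2\log(b-a+1)$, giving \eqref{eq: error bound in B Process}.

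I expect the technical heart, and the main obstacle, to be the quantitative stationary phase estimate with an error term uniform in $m$: one must treat carefully the transitional frequencies $m$ for which $x_m$ approaches $A$ or $B$ (where the Gaussian approximation to $I_m$ degrades and where the logarithm is generated), and one must keep track of the dependence on $\eta$ rather than merely on $\Lambda$ so as to land on the stated $\eta^2\log(b-a+1)$. A convenient shortcut, which is essentially the route taken in the references, is to quote a ready-made quantitative stationary phase lemma from \cite[Chapter 8]{IwaniecKowalski2004} or \cite{GrahamKolesnik1991} and then carry out only the bookkeeping described above.
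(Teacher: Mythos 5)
Your outline is correct and coincides with the paper's treatment: the paper does not prove this theorem but quotes it verbatim from \cite[Theorem 8.16]{IwaniecKowalski2004}, describing the proof in exactly the terms you use ("Poisson summation and a stationary phase argument"). Your sketch — truncated Poisson summation, the first-derivative test for the non-stationary frequencies yielding the $\log(b-a+1)$ term, and quantitative stationary phase at the $x_m$ with careful handling of the transitional $m$ near the endpoints producing the $\Lambda^{-1/2}$ — is the standard proof of the cited result, so nothing further is needed.
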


We will often need to bound weighted exponential sums.
To reduce these estimates to bounding unweighted sums, we use partial summation in the form of:

\begin{lemma}
\label{lem:partial summation}Let $(a_{s})_{s}$ and $(b_{s})_{s}$
be sequences of complex numbers. Fix a constant $c>1$.
If $T>0$ is such that $\left|b_{s}-b_{s+1}\right|\leq T/s$,
then
\[
\left|
\sum_{S\leq s<\tilde{S}}a_{s}b_{s}
\right| 
\leq\left(
\max_{S\leq s\leq cS}\left|b_{s}\right|
+O(T)\right)\max_{S\leq\tilde{S}\leq cS}\left|
\sum_{S\leq s<\tilde{S}}a_{s}\right|
\]
for any positive integers $S$ and $\tilde{S}$ 
satisfying $S\leq\tilde{S}\leq cS$.
\end{lemma}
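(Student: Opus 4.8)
My plan is to prove Lemma~\ref{lem:partial summation} by a single application of Abel (partial) summation; I expect no genuine difficulty, since the statement is formulated precisely so that it can later be fed the exponential sums $a_s$ against slowly varying weights $b_s$ (powers of $s$, values of $\wh f$, and the like). Write $\mathcal{A} := \max_{S\le \tilde S\le cS}\big|\sum_{S\le s<\tilde S}a_s\big|$ and introduce the partial sums $A(t):=\sum_{S\le s<t}a_s$, so that $A(S)=0$, $a_s=A(s+1)-A(s)$, and -- the key observation -- every value $A(t)$ with $S\le t\le cS$ is among the quantities bounded by $\mathcal{A}$. Substituting $a_s=A(s+1)-A(s)$ and rearranging yields the identity
\[
\sum_{S\le s<\tilde S}a_s b_s \;=\; A(\tilde S)\,b_{\tilde S-1}\;-\;\sum_{S\le s\le \tilde S-2} A(s+1)\,(b_{s+1}-b_s),
\]
the lower boundary term dropping out because $A(S)=0$. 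Since $\tilde S\le cS$, all arguments of $A$ that appear lie in $[S,cS]$, hence $|A(\cdot)|\le \mathcal{A}$ throughout, and all arguments of $b$ lie in $[S,cS]$ as well.

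It then remains to estimate the two pieces. The boundary term is at most $\mathcal{A}\cdot\max_{S\le s\le cS}|b_s|$. For the second, I would factor out $\mathcal{A}$ and invoke the hypothesis $|b_{s+1}-b_s|\le T/s$ to get
\[
\sum_{S\le s\le \tilde S-2}|b_{s+1}-b_s|\;\le\;T\sum_{S\le s\le cS}\frac1s\;\le\;T\,(1+\log c),
\]
using $S\ge 1$ to bound the harmonic sum over $[S,cS]$ by $\log c+O(1)$; as $c$ is fixed this is $O(T)$, contributing $\mathcal{A}\cdot O(T)$. Adding the two bounds proves the inequality for $\sum_{S\le s<\tilde S}a_s b_s$; to pass to $\sum_{S<s<\tilde S}a_s b_s$ one removes only the single term $a_S b_S$, and since $a_S=A(S+1)$ we have $|a_S b_S|\le \mathcal{A}\max_{S\le s\le cS}|b_s|$, which is again of the claimed shape (at worst adjusting the implicit constant attached to the $\max|b_s|$-term).

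The only points requiring any care are (i) keeping the partial sums inside the window $[S,cS]$ so that they are genuinely controlled by $\mathcal{A}$ -- which is exactly what the constraint $S\le\tilde S\le cS$ buys us -- and (ii) noting that the harmonic-sum factor $\sum_{S\le s\le cS}s^{-1}=\log c+O(1)$ is $O(1)$ precisely because $c$ is a fixed constant. No arithmetic or analytic input beyond these bookkeeping observations is needed.
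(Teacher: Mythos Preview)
Your Abel summation argument is correct and is exactly the standard proof one would supply; the paper itself states Lemma~\ref{lem:partial summation} as a preliminary without proof, so there is nothing to compare against. One small remark: when you pass from $\sum_{S\le s<\tilde S}$ to $\sum_{S<s<\tilde S}$ you pick up an extra $\mathcal{A}\max|b_s|$, doubling the coefficient in front of $\max|b_s|$, but since the inequality in the lemma is understood as $\big|\sum a_sb_s\big|\ll(\max|b_s|+T)\mathcal{A}$ (the left side being complex, and the right already containing an $O(T)$), this is harmless.
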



\section{Reducing to Exponential Sum Bounds}
\label{s:Pair Correlation}

\subsection{Decomposing the sums and applying the B-Process}
\label{ss:Decomposing the sums}

Now consider the term $\cE(N)$, defined in \eqref{E def}.
We shall apply the B-process (Theorem \ref{thm:B-Process}) to the exponential sum in $\cE(N)$,
but presently we do not have sufficient control on the derivative of 
$y\mapsto \alpha k y^{\theta}$. To gain control, 
we use several 
decompositions. 
First we assume (w.l.o.g.) that $N = N_Q := Q^{\Gamma}$, for some fixed $\Gamma>0$,
 which will be chosen to be sufficiently large in our proof (in
a way depending on $\theta$), 
as it is enough to prove the correlations converge along such a subsequence, see {\cite[Lemma 3.1]{RudnickTechnau2020}}. 
Thus, we decompose 
the inner summation 
into the pieces
\[
E_q(k) : = \sum_{y \in [N_q, N_{q+1})} e(\alpha k y^{\theta})
\]
where $N_q:= q^{\Gamma}$. To catch the largest term, we set $N_{Q+1} = N_Q + 1$.
Thus, 
\begin{align}\label{Eq decom}
	\cE(N) = \frac{2}{N^2}\sum_{k \in [1,N^{1+\varepsilon})} 
	\wh{f} \Big(\frac{k}{N}\Big) \left|\sum_{q\leq Q}E_{q}(k)\right|^2.
\end{align}
Now, the next lemma shows that we can replace each $E_q(k)$ by 
\begin{align} \label{EB def}
  E^{(B)}_{q}(k) := c_1 \sum_{ r \in \cR_{q}(k)} \frac{k^{\Theta/2}}{r^{(\Theta+1)/2}} 
  e(\beta k^{\Theta}r^{1-\Theta}),
  \quad \mathrm{where}\,\, \cR_{q}(k) := \alpha \theta k(N_{q+1}^{\theta-1},  N_{q}^{\theta-1}],
\end{align}
which is the main term after applying the B-Process to $E_q(k)$; 
the constants $c_1$ and $\beta$ 
are defined by
$$
c_1 := e(-1/8)\sqrt{\Theta(\alpha\theta)^{\Theta}}, \qquad \mathrm{and} \qquad\,
\beta := \alpha^\Theta(\theta^{1-\Theta}-\theta^{\Theta}).
$$
For later reference let $\cR(k):=\bigcup_{q\in [1,Q]} \cR_q(k)$.
Let
\begin{align} \label{EB qdecom}
  \cE^{(B)}(N,\wh{f}): = \frac{2}{N^2}\sum_{\vect{q} \in [1,Q]^2} \sum_{k \in [1,N^{1+\varepsilon})} \wh{f} \left(\frac{k}{N}\right) E_{q_1}^{(B)}(k)\overline{E_{q_2}^{(B)}(k)}.
\end{align}

\begin{lemma}\label{lem:E B}
	Let $\cE(N)$ and  ${\cE^{(B)}(N,\wh{f})}$ be defined as in \eqref{Eq decom} and \eqref{EB qdecom} respectively. 
	Then
	\[
	\cE(N)={\cE^{(B)}(N,\wh{f})}+O\left(N^{-\theta+\varepsilon}+\left|{\cE^{(B)}(N,|\wh{f}|)}\right|^{1/2}N^{-\theta/2+\varepsilon}\right).
	\]

\end{lemma}

\begin{proof}
First we apply Theorem \ref{thm:B-Process} to each $E_q(k)$, $q\leq Q$, with 
   $\Lambda = k\alpha\theta(1-\theta)N_{q+1}^{\theta-2}$
and $\eta = 2^{5\Gamma}$. Hence,
  \begin{align*}
  \abs{E_q(k) - E_q^{(B)}(k)} \ll \frac{N_{q+1}}{\sqrt{N_{q+1}^{\theta}k}} + \log N \ll N^{1-\theta/2}k^{-1/2},
  \end{align*}
  with the implied constant being uniform in $q$ and $k$. 
  Thus
  \begin{align*}
    \cE(N)
    =  \frac{2}{N^2}\sum_{k \in [1,N^{1+\varepsilon})} \wh{f} \Big(\frac{k}{N}\Big) \left|\sum_{q\leq Q}E^{(B)}_{q}(k)+O\left(N^{1-\theta/2+
    {1/\Gamma}}k^{-1/2}\right)\right|^2.
  \end{align*}
  { Taking 
  $\Gamma>1/\varepsilon$ ensures that in the above error
  term $N^{1/\Gamma}$ can be replaced 
  by $N^{\varepsilon}$.}
  Squaring out and applying the Cauchy-Schwarz inequality yields now the lemma. 
\end{proof}

\subsection{The Diagonal}

Presently our goal is to establish \eqref{E def}. 
In view of Lemma \ref{lem:E B}, 
it suffices to prove that
\[
{\cE^{(B)}(N,\wh{f})}={f(0)}+o(1)\quad\text{ and }\quad{\cE^{(B)}(N,|\wh{f}|)={O}(1)},\quad N\to\infty,
\] 
for in that case \eqref{E def} is true and Theorem \ref{thm: main theorem} will follow.
{Computing the estimates of $\cE^{(B)}(N,\wh{f})$ and $\cE^{(B)}(N,|\wh{f}|)$ is done by completely analogous way and, therefore, we give detailed proofs only for $\cE^{(B)}(N,\wh{f})$.}

The main term, ${f(0)}$ will come from the diagonal term when expanding the square in ${\cE^{(B)}(N,\wh{f})}$.
That is, in \eqref{EB qdecom} we square out and consider the term
\begin{align*}
  {\cD(N,\wh{f})} := \frac{2\abs{c_1}^2}{N^2} \sum_{k \in [1,N^{1+\varepsilon})} 
  \wh{f}\Big(\frac{k}{N}\Big) \sum_{r \in \cR(k)} \frac{k^{\Theta}}{r^{\Theta+1}}.
\end{align*}
\begin{lemma}\label{lem:Diagonal}
  If $\theta \in (0,1)$, then
  \begin{align}\label{diag eqn}
   {\cD(N,\wh{f})} = {f(0)} +o(1)\quad\text{ and }\quad{\cD(N,|\wh{f}|)=O(1)} ,\quad N\to \infty 
   {\color{blue}.}
  \end{align}
\end{lemma}

\begin{proof}
    By a Riemann integral argument (see for example \cite[Theorem 3.2]{Apostol1976}) the sum
  \[
  \sum_{r \in \cR(k)} \frac{1}{r^{\Theta+1}}=\frac{\left(\alpha\theta k\right)^{-\Theta}-
  	\left(\alpha \theta k {(N+1)}^{\theta-1}\right)^{-\Theta}}{-\Theta}+O\left(\left(\alpha\theta k N^{\theta-1}\right)^{-\Theta-1}\right).
  \]
  
  Recall that $\abs{c_1}^2 = \Theta(\alpha\theta)^{\Theta}$. 
  Thus
  \begin{align*}
    {\cD(N,\wh{f})}
    = \frac{2}{N^2}\sum_{k \in [1,N^{1+\varepsilon})} 
    \wh{f}\Big(\frac{k}{N}\Big)
    \left( 
    {N} + O\left(\frac{N^{2-\theta}}{k}\right)\right)
    &= \frac{2}{N} \sum_{k \in [1,N^{1+\varepsilon})} \wh{f}\Big(\frac{k}{N}\Big) + O(N^{-\theta+\varepsilon}).
  \end{align*}
 Now \eqref{diag eqn} follows by the Poisson summation formula and the fact that $f$ is an even function.
 {The second statement of the lemma follows as above, only this time we employ the rapid decay of $\wh{f}$ to give an upper bound for $\cD(N,|\wh{f}|)$.}
\end{proof}

\subsection{Partial Summation}\label{ss:Partial Summation}

Lemma \ref{lem:E B} and Lemma \ref{lem:Diagonal} reduce the problem to estimating {$\cE^{(B)}(N,{g})-\cD(N,{g})$ for $g=\wh{f}$ and $|\wh{f}|$}. To estimate these errors requires a second application of the $B$-process, this time to the $k$-variable. In order to have adequate control on the derivative of $y\mapsto\gamma(\vect{r})y^\Theta$ in the next section, 
{we introduce a second 
decomposition}. In particular, let $U\in\mathbb{N}$ be such that $e^U\leq N^{1+\varepsilon}<e^{U+1}$. Then we may decompose the sum in $k$ into a sum over intervals $[e^{u},e^{u+1})$, the last one being $[e^U,N^{1+\epsilon})$. Let
\begin{align*}
  {\cE_{\vect{q},u}^{(B)}(N,g)}:= \frac{2}{N^2}\sum_{k \in [e^u,e^{u+1})} g\Big(\frac{k}{N}\Big) E_{q_1}^{(B)}(k) \overline{E_{q_2}^{(B)}(k)}.
\end{align*}
The next lemma reduces matters further to bounding the unweighted exponential sums 
\[
S(\gamma,\cI) :=\sum_{k\in \cI} e(\gamma k^\Theta)
\]
requiring the bound to be uniform in the size of $\gamma$, and the interval $\cI$. 
Therefore, we introduce
\[
\tilde{S}_{\Lambda_1,\Lambda_2}(u) := \sup_{\gamma \in [\Lambda_1, \Lambda_2]}  \,
\sup_{\cI \subseteq [e^u,e^{u+1})} \vert S(\gamma,\cI) \vert.
\]
 With this maximal operator at hand, we have the following

\begin{lemma}\label{lem:partial summation app}
  Let $q_1\leq q_2$ { and $g=\wh{f}$ or $|\wh{f}|$}. Then
  \begin{align}\label{part sum new 2}
      {\cE_{\vect{q},u}^{(B)}(N,g)}
      =
      1_{\lbrace q_1=q_2\rbrace}{D_{u,q_1}(N,g)} +
      O\left(N^{-2+\varepsilon} 
  \sum_{j \in \cJ_{u,q_1}}
  e^j  \sqrt{N_{q_2}^{\theta} N_{q_1}^{2-\theta}}
  \tilde{S}_{\Lambda_1(j),\Lambda_2(u)}(u)\right),
  \end{align}
  {where $1_{\lbrace P\rbrace}$ is $1$ if the property $P$ is satisfied and $0$ otherwise,}
  \[{D_{u,q}(N,g)}:= 
  \frac{2\abs{c_1}^2}{N^2} \sum_{k \in [e^{u},e^{u+1})} 
  g\Big(\frac{k}{N}\Big) \sum_{r \in \cR_q(k)} \frac{k^{\Theta}}{r^{\Theta+1}},
  \]
  $\Lambda_1(j):= C_1 e^{j-u\Theta}N_{q_1}$, $\Lambda_2(u) := C_2e^{u(1-\Theta)}N_{q_2}^\theta$ and $\cJ_{u,q_1} := [0,C_3 + u - (1-\theta) \Gamma \log q_1)$ for some constants $C_1,C_2,C_3>0$.
\end{lemma}

\begin{proof}

  For brevity, in this proof, let
  $$
  \gamma(\vect{r}) := \beta (r_2^{1-\Theta } -r_1^{1-\Theta}).
  $$  
  {We also consider only the case $g=\wh{f}$ since the second case follows by repeating the same arguments.}
  With this notation we have
  \begin{align*}
    {\cE_{\vect{q},u}^{(B)}(N,\wh{f}) }= 
    \frac{2\abs{c_1}^2}{N^2}\sum_{k\in [e^u,e^{u+1})} 
    \wh{f}\Big(\frac{k}{N}\Big) \sum_{\substack{r_i\in \cR_{q_i}(k)\\ i=1,2}} 
    \frac{k^{\Theta}}{(r_1r_2)^{\frac{\Theta+1}{2}}} e(-\gamma(\vect{r}) k^\Theta).
  \end{align*}
  Thus, the $r_i$ which appear in the overall sum all fall within the ranges
  \begin{align}
  \cR_{q_i,u}:=
  \bigg(
  \frac{\alpha\theta e^{u}}
       {N_{q_{i}+1}^{1-\theta}},
  \frac{\alpha\theta e^{u+1}}{N_{q_{i}}^{1-\theta}}
  \bigg)
  \qquad (i=1,2).
  \end{align}
  Now we interchange the $r$ and $k$ summations. For each choice of $r_1$ and $r_2$, we have that 
  \begin{align*}
    k \in \cK_{\vect{q}}(\vect{r}) := \frac{1}{\alpha \theta}
    \left[\max( 
r_{1}N_{q_{1}}^{1-\theta} , 
r_{2}N_{q_{2}}^{1-\theta},\alpha \theta e^u ),
\min (
r_{1}N_{q_{1}+1}^{1-\theta}, 
r_{2}N_{q_{2}+1}^{1-\theta}, \alpha \theta e^{u+1})\right).
  \end{align*}
 {Note that} this interval may sometimes be empty. With that,
  \begin{align*}
   {\cE_{\vect{q},u}^{(B)}(N,\wh{f}) } = \frac{2\abs{c_1}^2}{N^2}\sum_{\substack{r_i\in\cR_{q_i,u}\\i=1,2}}(r_1r_2)^{-\frac{\Theta+1}{2}}  \sum_{k \in \cK_{\vect{q}}(\vect{r})}\wh{f}\Big(\frac{k}{N}\Big)k^{\Theta} e(-\gamma(\vect{r}) k^\Theta).
  \end{align*}  
  Next, we remove the weights via partial summation, Lemma \ref{lem:partial summation}. 
  Let $w_k = \wh{f}(\frac{k}{N}) k^{\Theta}$, we first show 
\begin{equation}
  \left|w_k-w_{k+1}\right|\ll \frac{N^{\varepsilon} e^{\Theta u}}{k},\label{eq: difference of weights smaller}
\end{equation}
for any $k \in \mathcal{K}_{\vect{q}}(\vect{r})$ with the implied constant being absolute. 
The mean value theorem implies
\begin{align*}
\left|w_k-w_{k+1}\right| \leq
\left|\wh{f}\Big(\frac{k}{N}\Big)\right|
\left|k^{\Theta}-(k+1)^{\Theta}
\right|+\left|\wh{f}\Big(\frac{k}{N}\Big)-\wh{f}\left(\frac{k+1}{N}\right)
\right|(k+1)^{\Theta}
\ll \frac{|w_k|}{k} + \frac{e^{\Theta u}}{N}.
\end{align*}
Using that $k\ll N^{1+\varepsilon}$, yields \eqref{eq: difference of weights smaller}.

Note that $ K_{\mathbf{q}}(\mathbf{r}) \subset [e^u, e^{u+1})$. Thus Lemma \ref{lem:partial summation}
is applicable and, in combination with 
\eqref{eq: difference of weights smaller}, yields
\begin{align*}
  \sum_{k\in\mathcal{K}_{\mathbf{q}}(\mathbf{r})}
  w_k e(-\gamma (\vect{r})k^\Theta)
  \ll 
  N^{\varepsilon}e^{\Theta u}
  \max_{K \in \cK_{\vect{q}}(\vect{r})}\left|
  \sum_{k \in \cK_{\vect{q}}(\vect{r}) \cap [1,K]}
e(\gamma (\vect{r})k^\Theta)\right|.
\end{align*}
Note that each $r_i\in \cR_{q_i,u}$ satisfies 
$r_i \asymp e^uN_{q_i}^{-(1-\theta)}$ for $i=1,2$.

To reduce matters to exponential sums requires control of the difference 
$\gamma(\vect{r})$, thus let $\cT(j)$ denote the set of 
pairs $(r_1,r_2)\in \mathcal{R}_{q_{1},u} \times \mathcal{R}_{q_{2},u}$
satisfying $e^j\leq\vert  r_2 - r_1\vert <e^{j+1}$. 
Recall $q_1\le q_2$. From 
now on we assume $ r_2 < r_1$, since
the case $r_2 > r_1$ can be done in exactly 
the same way and therefore 
shall not be discussed in detail.
{Observe also that $q_1<q_2$ and $r_1=r_2$ implies that $\cK_{\vect{q}}(\vect{r})=\emptyset$, i.e. this case gives zero contribution.}
Assume now $q_1 < q_2$. Then
\begin{align*}
\gamma(\mathbf{r}) = \beta(\Theta-1)\int_{r_2}^{r_1} \tau^{-\Theta} \mathrm{d}\tau 
\gg (r_1 -r_2) r_1^{-\Theta}
\gg (r_1 -r_2) \left(\frac{N_{q_1}^{1-\theta}}{ \alpha\theta e^{u+1}}\right)^{\Theta}
\gg(r_1 -r_2) e^{-u \Theta} N_{q_1}.
\end{align*}
On the other hand,
\begin{align*}
\gamma(\mathbf{r})=\beta (r_2^{1-\Theta } -r_1^{1-\Theta})
&\ll\max\left(r_1^{1-\Theta},r_2^{1-\Theta}\right)=\min(r_1,r_2)^{1-\Theta}\\
&\ll\min\left(e^uN_{q_1}^{-(1-\theta)},e^uN_{q_2}^{-(1-\theta)}\right)^{1-\Theta}=e^{u(1-\Theta)}N_{q_2}^\theta.
\end{align*}
Thus, provided $\vect{r} \in \cT(j)$ we deduce that $\gamma (\vect{r}) \in [ \Lambda_1(j),\Lambda_2(u)]$.

Moreover, the range of $j$ can be constrained by
\begin{align*}
    e^j \asymp r_1 - r_2 \ll r_1 \ll e^u N_{q_1}^{-(1-\theta)}
\end{align*}
which implies that $j \in \cJ_{u,q_1}$. Thus, we have the following bound
\begin{align*}
   {\cE_{\vect{q},u}^{(B)}(N,\wh{f}) }
  &\ll 
  N^{-2+\varepsilon}\sum_{j \in \cJ_{u,q_1}}\Big(\frac{e^{u}}{ N_{q_1}^{1-\theta}} 
  \frac{e^{u}}{ N_{q_2}^{1-\theta}}\Big)^{-\frac{\Theta+1}{2}} e^{\Theta u}
  \#  \cT(j) \tilde{S}_{\Lambda_1(j),\Lambda_2(u)}(u)\\
  &\ll
  N^{-2+\varepsilon} 
  \sum_{j \in \cJ_{u,q_1}}
  \Big(\frac{e^{u}}{ N_{q_1}^{1-\theta}} 
  \frac{e^{u}}{ N_{q_2}^{1-\theta}}\Big)^{-\frac{\Theta+1}{2}}
  e^{\Theta u}
  \frac{e^{u}}{ N_{q_2}^{1-\theta}} e^{j}
  \tilde{S}_{\Lambda_1(j),\Lambda_2(u)}(u).
\end{align*}
We observe that
\begin{align*}
  \Big(\frac{e^{u}}{ N_{q_1}^{1-\theta}} 
  \frac{e^{u}}{ N_{q_2}^{1-\theta}}\Big)^{-\frac{\Theta+1}{2}}
  e^{\Theta u}
  \frac{e^{u}}{ N_{q_2}^{1-\theta}} e^{j}
  =
  e^j \sqrt{N_{q_2}^{\theta} N_{q_1}^{2-\theta}}.
\end{align*}
Overall we infer, for $q_1<q_2$, that
\begin{align*}
    {\cE_{\vect{q},u}^{(B)}(N,\wh{f}) }
    &\ll  
    N^{-2+\varepsilon} 
  \sum_{j \in \cJ_{u,q_1}}
  e^j \sqrt{N_{q_2}^{\theta} N_{q_1}^{2-\theta}}
 \tilde{S}_{\Lambda_1(j),\Lambda_2(u)}(u).
\end{align*} 
If $q_1 = q_2$, we remove the diagonal $r_1 = r_2$, which corresponds to the term ${D_{u,q_1}(N,\wh{f})}$,
and apply exactly the same bound to the off-diagonal. 
\qedhere

\end{proof}

\section{Proof of the Main Theorem}
\label{s:Proof of the Main Theorem}

\subsection{Exponential Sum Bounds}
\label{ss:Exponential Sum Bounds}
Thanks to Lemmas \ref{lem:E B}, \ref{lem:Diagonal}, and \ref{lem:partial summation app}, 
we will show that Theorem \ref{thm: main theorem} can be deduced from the next lemma.

\begin{lemma}\label{lem: maximal operator estimates} 

Assume $q_1 \leq q_2$. 
Then we have the following bound
  \begin{equation}\label{eq: max op bound 2}
    \tilde{S}_{\Lambda_1(j),\Lambda_2(u)}(u)\ll   e^{8u/15} N_{q_2}^{11\theta/30} + e^{u-j/2}N_{q_1}^{-1/2}.
  \end{equation}
\end{lemma}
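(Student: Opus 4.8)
The statement is an exponential–sum estimate for the monomial phase $k\mapsto\gamma k^{\Theta}$, and the plan is to prove it by van der Corput's method: one application of the $B$-process to pass to a dual sum, followed by Weyl differencing (the $A$-process) applied to that dual sum. It suffices to bound $|S(\gamma,\cI)|$ for each fixed $\gamma\in[\Lambda(j),\eta\Lambda(j))$ and each $\cI\subseteq[e^{u},e^{u+1})$; write $K:=e^{u}$ and $Y:=\gamma K^{\Theta}$, so that $\phi(k):=\gamma k^{\Theta}$ satisfies $\phi^{(r)}(k)\asymp YK^{-r}$ on $\cI$ (with implied constants depending only on $\theta$ and $r$, as $\Theta>1$ is fixed). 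The admissible ranges of $\gamma$ give $Y\asymp\gamma e^{u\Theta}\in[e^{j}N_{q_{1}},Ce^{j}N_{q_{2}})$ and $|\cI|\le e^{u}$, and the standing hypothesis $e^{u}>q_{2}^{w}$ with $w\ge\Gamma(1-\theta+\varepsilon)$ forces $N_{q_{2}}^{1-\theta}\ll e^{u}$, a relation that will produce the $\theta$ in the exponent $11\theta/30$. I would first apply the $B$-process, Theorem \ref{thm:B-Process}, with $\Lambda\asymp YK^{-2}$ and a bounded $\eta$; after removing the slowly varying amplitude $\phi''(x_{m})^{-1/2}\asymp K/\sqrt{Y}$ by partial summation (Lemma \ref{lem:partial summation}) this gives
\[
|S(\gamma,\cI)|\ \ll\ \frac{K}{\sqrt{Y}}\,\sup_{\cI'}\Big|\sum_{m\in\cI'}e\big(\Psi(m)\big)\Big|\ +\ \frac{K}{\sqrt{Y}}\ +\ \log(Y+2),
\]
where $\cI'$ runs over subintervals of the dual range, $|\cI'|\le M_{0}:=Y/K$, and $\Psi$ is, up to lower–order terms, the monomial $\Psi(m)=c(\theta)\,\gamma^{-(1-\theta)/\theta}m^{1/\theta}$, with $\Psi^{(r)}(m)\asymp YM_{0}^{-r}$ for $m\asymp M_{0}$. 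This already yields the second term of \eqref{eq: max op bound 2}: since $Y\ge\Lambda(j)K^{\Theta}=e^{j}N_{q_{1}}$ the $B$-process error is $\ll e^{u}/\sqrt{e^{j}N_{q_{1}}}=e^{u-j/2}N_{q_{1}}^{-1/2}$, and when $M_{0}\le2$ the dual sum is trivially $O(1)$ so only this error remains.

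Next I would estimate $\sum_{m\in\cI'}e(\Psi(m))$ by the $A$-process, Theorem \ref{thm:A-Process}, with parameter $l=3$ (so $L=2^{3}=8$, $4L-2=30$), which requires the five continuous derivatives of $\Psi$; this is legitimate because $1/\theta$ is a fixed constant. (The exceptional exponents $1/\theta\in\{2,3,4\}$, in particular $\theta=1/3$, make $\Psi$ a polynomial of degree $\le4$, and there one uses Weyl's inequality instead, which only improves the bound.) This produces $\sum_{m\in\cI'}e(\Psi(m))\ll Y^{1/30}M_{0}^{25/30}+M_{0}Y^{-1}$, and inserting $M_{0}\asymp Y/K$ gives the combined estimate
\[
|S(\gamma,\cI)|\ \ll\ \frac{K}{\sqrt{Y}}\Big(Y^{1/30}(Y/K)^{25/30}+(Y/K)Y^{-1}\Big)+\frac{K}{\sqrt{Y}}+\log(Y+2)\ \ll\ K^{1/6}Y^{11/30}+\frac{K}{\sqrt{Y}}+\log(Y+2).
\]

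It remains to take suprema and substitute $K=e^{u}$. The term $K^{1/6}Y^{11/30}$ is increasing in $\gamma$ and $K/\sqrt{Y}$ is decreasing, so the supremum over $\gamma\in[\Lambda(j),\eta\Lambda(j))$ is governed by the endpoints, where $Y\asymp e^{j}N_{q_{2}}$ and $Y\asymp e^{j}N_{q_{1}}$ respectively; the latter reproduces the second term of \eqref{eq: max op bound 2} exactly as above. For the former, writing
\[
K^{1/6}Y^{11/30}\Big|_{Y\asymp e^{j}N_{q_{2}}}\ \asymp\ e^{u/6}\big(e^{j}N_{q_{2}}^{1-\theta}\big)^{11/30}\,N_{q_{2}}^{11\theta/30}
\]
and bounding $e^{j}N_{q_{2}}^{1-\theta}\ll e^{u}$ — this is where the range $j\in\cJ_{u,q_{1}}$ and the hypothesis $e^{u}>q_{2}^{w}$ are combined, after splitting the $j$-range according to whether $e^{j}$ is small or large compared with $e^{u}N_{q_{2}}^{-(1-\theta)}$ and treating the residual range by a direct second–derivative estimate — one obtains $K^{1/6}Y^{11/30}\ll e^{u/6+11u/30}N_{q_{2}}^{11\theta/30}=e^{8u/15}N_{q_{2}}^{11\theta/30}$. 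The exponent $8/15=\tfrac16+\tfrac{11}{30}$ is thus forced by the choice $l=3$ (which fixes the denominator $4L-2=30$), and absorbing the logarithm completes \eqref{eq: max op bound 2}.

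I expect the main obstacle to be precisely this last step: one must control $Y$ through the interlocking constraints coming from the definition of $\eta$, the range $\cJ_{u,q_{1}}$, and the running hypothesis $e^{u}>\max(q_{1}^{w},q_{2}^{w})$, and extract the clean exponents $8/15$ and $11\theta/30$ uniformly in $\gamma$, $\cI$, $\vect{q}$ and $u$; this is likely to need a genuine case distinction on the size of $\gamma$ (equivalently on $Y$, equivalently on $M_{0}$ relative to $e^{u}$), deciding in which regime one passes to the dual and applies $A^{3}$ and in which regime a cruder direct estimate already suffices. A secondary and purely technical point is checking the growth conditions \eqref{eq: growth conditions in B Process} and \eqref{growth in A-process} with constants independent of all parameters, and disposing of the degenerate exponents $1/\theta\in\Z$.
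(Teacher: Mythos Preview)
Your proposal is correct and follows essentially the same route as the paper: apply the $B$-process to $\sum_{k}e(\gamma k^{\Theta})$, remove the amplitude by partial summation, then estimate the dual sum (whose phase is a constant times $\gamma^{\vartheta}h^{1-\vartheta}$ with $1-\vartheta=1/\theta$) by the $A$-process with $l=3$, giving precisely your $K^{1/6}Y^{11/30}+K/\sqrt{Y}$.

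The only point of divergence is the final substitution. The paper does not go through $Y\ll e^{j}N_{q_{2}}$ followed by $e^{j}N_{q_{2}}^{1-\theta}\ll e^{u}$, nor does it split the $j$-range; instead it inserts the direct bound $\gamma\ll e^{u(1-\Theta)}N_{q_{2}}^{\theta}$ (equivalently $Y\ll e^{u}N_{q_{2}}^{\theta}$), which immediately yields $K^{1/6}Y^{11/30}\ll e^{8u/15}N_{q_{2}}^{11\theta/30}$. This bound on $\gamma$ is valid for the values of $\gamma$ that actually arise in Lemma~\ref{lem:partial summation app}, since there $\gamma=\beta(r_{2}^{1-\Theta}-r_{1}^{1-\Theta})<\beta r_{2}^{1-\Theta}\asymp e^{u(1-\Theta)}N_{q_{2}}^{\theta}$; so the case distinction you anticipate as ``the main obstacle'' is in fact bypassed. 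Your route would also work, but is more laborious than needed. Your side concern about the degenerate case $1/\theta\in\Z$ is not treated separately in the paper: Theorem~\ref{thm:A-Process} applies verbatim to polynomial phases, so no special argument is required.
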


\begin{proof}
 First we apply the B-process (Theorem \ref{thm:B-Process}) in the $k$ variable. $k$ is in an interval $[A,B]$ of size $e^u$, and the phase function 
 $\phi(k) = \gamma k^{\Theta}$ 
 satisfies 
 $\phi(k) \asymp \gamma e^{u\Theta}$,
 as well as $\phi^\prime(k) \asymp \gamma
  e^{u(\Theta-1)}$ and $\phi^{\prime\prime}(k) 
  \asymp \gamma e^{u(\Theta-2)}$, 
  where $\gamma \in [ \Lambda_1(j),\Lambda_2(u)]$.
  Applying Theorem \ref{thm:B-Process} 
  and a trivial estimate gives one bound 
  (which will suffice for $\theta < 1/3$).

    In fact, we require slightly more than the B-process to move past $\theta = 1/3$. Thus we will apply the B-process precisely, and then use partial summation and the A-process to bound the resulting sum. First, set $\vartheta = 1/(1-\Theta)$ and apply Theorem \ref{thm:B-Process}, to conclude 
    \begin{align}\label{B process app S}
    \sum_{k\in[e^u,e^{u+1})}e(\gamma k^{\Theta})=c_3\sum_{h\in[a,b)}\sqrt{\frac{\gamma^{\vartheta}}{h^{\vartheta+1}}}\,e(c_4\gamma^{\vartheta} h^{1-\vartheta})+O(e^{u-j/2}N_{q_1}^{-1/2}+\log(N_{q_2}))
    \end{align}
    where $a<b$ are positive integers of size $\gamma e^{u(\Theta -1)}$ and $c_3,c_4$ are (complex, respectively real) nonzero constants.
     A trivial estimate implies
    that we can assume $b-a \ge 10$.

    By exploiting partial summation, we may apply Lemma \ref{lem:partial summation} to the main term in \eqref{B process app S}. Thus, to prove \eqref{eq: max op bound 2}, it suffices to bound:
    \begin{equation}\label{eq: k summation unweighted intermediate claim}
        \frac{e^{u}}{\gamma^{1/2}
         e^{\Theta u/2 }}\sum_{h\in[a,b)}
         e(c_4\gamma^{\vartheta} h^{1-\vartheta}) 
    \end{equation}
    for $a<b$ being of size 
    $\gamma e^{u(\Theta -1)}$ 
    and such that $b-a\geq 10$. 
    To that end, we use 
    Theorem \ref{thm:A-Process} 
    for an arbitrary integer $l$. 
    In that notation, let 
    $F \asymp \gamma e^{u\Theta}$, 
    and $M \asymp \gamma e^{u(\Theta -1)}$.
    Thus, we conclude that (recall $L = 2^l$)
    \begin{align}
    \begin{aligned}
        \sum_{h\in[a,b)}
        e(c_4 \gamma^{\vartheta} h^{1-\vartheta}) 
        &\ll 
        (\gamma e^{u\Theta})^{\frac{1}{4L-2}}
        (\gamma 
        e^{u(\Theta -1)})^{1-\frac{l+2}{4L-2}} 
        +  e^{-u}.
    \end{aligned}
    \end{align}
    Inserting this into \eqref{eq: k summation unweighted intermediate claim} and using that $e^{j-u\Theta}N_{q_1}\ll\gamma \ll e^{u(1-\Theta)}N_{q_2}^\theta$
    
    \begin{align*}
        \frac{e^{u}}{\gamma^{1/2} 
        e^{\Theta u/2 }}\sum_{h\in[a,b)}
        e(c_4\gamma^{\vartheta} h^{1-\vartheta})
        &\ll
        e^u (e^uN_{q_2}^\theta)^{\frac{1}
        {4L-2}-1/2}(N_{q_2}^\theta)^{1-
        \frac{l+2}{4L-2}}
        +e^{-j/2}N_{q_1}^{-1/2},
    \end{align*}
    which, on rearranging and choosing $l=3$, gives \eqref{eq: max op bound 2}.
    
\end{proof}

\subsection{Proof of Theorem \ref{thm: main theorem}}
\label{ss:Proof of Main}

Recall that, after applying Lemma \ref{lem:E B}, our goal is to show
$$
{\cE^{(B)}(N,\wh{f}) }=\sum_{\vect{q}\in[1,Q]^2}\sum_{u\leq U}{\cE_{\vect{q},u}^{(B)}(N,\wh{f}) }={f(0)}+o(1)\quad\text{ and }\quad{\cE^{(B)}(N,|\wh{f}|) }=O(1),\quad N\to\infty.
$$



In view of \eqref{eq: max op bound 2} (a similar calculation shows that the second error term in \eqref{eq: max op bound 2} gives a negligible contribution) we have {for $q_1\leq q_2$} that
\begin{align*}
    N^{{\varepsilon-2}} \sqrt{N_{q_2}^{\theta} N_{q_1}^{2-\theta}} \sum_{j \in \cJ_{u,q_1}} e^{j} \tilde{S}_{\Lambda_1(j),\Lambda_2(u)}(u)
    &\ll
    N^{{\varepsilon-2}} \sqrt{N_{q_2}^{\theta} N_{q_1}^{2-\theta}} \sum_{j \in \cJ_{u,q_1}} e^{j} e^{8u/15} N_{q_2}^{11\theta/30} \\
    &\ll
    N^{{\varepsilon-2}} \sqrt{N_{q_2}^{\theta} N_{q_1}^{\theta}} e^{u} e^{8u/15} N_{q_2}^{11\theta/30}\\
    &\ll
    {N^{\frac{41\theta-14}{30}+\varepsilon}}.
\end{align*}
{The case $q_2<q_1$ can be treated in the same way by interchanging the roles of $q_1$ and $q_2$.}
Therefore, taking into account Lemma \ref{lem:Diagonal} and Lemma \ref{lem:partial summation app}, we obtain that

\begin{align*}
\sum_{\vect{q}\in[1,Q]^2}\sum_{u\leq U}{\cE_{\vect{q},u}^{(B)}(N,\wh{f}) }&={{\cD(N,\wh{f})}}+O\left(N^{\frac{{41\theta-14}}{30}+\epsilon}\sum_{\vect{q}\in[1,Q]^2}\sum_{u\leq U}1\right)\\
&={f(0)}+o(1)+O\left(N^{\frac{{41\theta-14}}{30}+\frac{2}{\Gamma}+\varepsilon}\right),\quad N\to\infty.
\end{align*}
In a similar fashion
\[
{\cE^{(B)}(N,|\wh{f}|)\ll1+N^{\frac{{41\theta-14}}{30}+\frac{2}{\Gamma}+\varepsilon}}.
\]
Thus, for any $\theta < 14/41$ the theorem follows by choosing $\Gamma$
large enough and $\varepsilon$ small enough.
\qed

   \section*{Acknowledgements}
   AS was supported by the Austrian Science Fund (FWF): project M 3246-N. NT was supported by a Schr\"{o}dinger Fellowship of the Austrian Science Fund (FWF): project J 4464-N. The authors would like to thank Jens Marklof and Zeev Rudnick for comments on a earlier draft and Christoph Aistleitner, Daniel El-Baz and Marc Munsch for discussions and comments. We are especially grateful to the anonymous {referees} for their thorough reading and many insightful comments. 
   
\section*{Funding}
This research was funded in whole, or in part, by the Austrian Science Fund (FWF) M 3246-N. For the purpose of open access, Athanasios Sourmelidis has applied a CC BY public copyright licence to any Author Accepted Manuscript version arising from this submission.

  \bibliographystyle{alpha}
  \bibliography{biblio}

    \hrulefill

    \vspace{4mm}
     \noindent Department of Mathematics, Rutgers University, Hill Center - Busch Campus, 110 Frelinghuysen Road, Piscataway, NJ 08854-8019, USA. \emph{E-mail: \textbf{chris.lutsko@rutgers.edu}}
     
         \vspace{4mm}
     \noindent Institute of Analysis and Number Theory, Graz University of Technology, Steyrergasse 30, 8010 Graz, Austria. \emph{E-mail: \textbf{sourmelidis@math.tugraz.at}}

\vspace{4mm}
     \noindent
Department of Mathematics, 
University of Wisconsin, 480 Lincoln Drive, Madison, WI, 53706, USA
\emph{E-mail: \textbf{ technau@wisc.edu}}

\end{document}